\title{Path--Averaged Contractions: A New Generalization of the Banach Contraction Principle\footnote{MSC2020: 47H10, 54E50 \\Keywords: Path Averaged Contractions, contractive mapping, fixed point, complete metric space.}}
\author{Nicola Fabiano\, \orcidlink{0000-0003-1645-2071}}
\affil{``Vin\v{c}a'' Institute of Nuclear Sciences - National 
Institute of the Republic of Serbia, University of Belgrade, Mike Petrovi\'{c}a 
Alasa 12--14, 11351 Belgrade, Serbia; nicola.fabiano@gmail.com}
\date{}
\newtheorem{theorem}{Theorem}
\newtheorem{proposition}[theorem]{Proposition}
\newtheorem{definition}[theorem]{Definition}
\newtheorem{example}[theorem]{Example}
\newtheorem{remark}[theorem]{Remark}
\begin{document}

\maketitle

%%%%%%%%%%%%%%%%%%%%%%%%%%%%%%%%%%%%%%%%%%%%%%%%%%%%%%%%%%%%%

%%%%%%%%%%%%%%%%%%%%%%%%%%%%%%%%%%%%%%%%%%%%%%%%%%

\begin{abstract}
We introduce a novel class of self-mappings on metric spaces, called \textbf{PA-contractions} (Path-Averaged Contractions), defined by an averaging condition over iterated distances. We prove that every continuous PA-contraction on a complete metric space has a unique fixed point, and the Picard iterates converge to it. This condition strictly generalizes the classical Banach contraction principle. We provide examples showing that PA-contractions are independent of F-contractions, Kannan, Chatterjea, and \'Ciri\'c contractions. A comparison table highlights the distinctions. The PA-condition captures long-term contractive behavior even when pointwise contraction fails.
\end{abstract}
\maketitle

\section{Introduction}

The Banach contraction principle \cite{Banach1922} is a cornerstone of nonlinear analysis. It states that if $ (X,d) $ is a complete metric space and $ T: X \to X $ satisfies
\begin{equation}
d(Tx, Ty) \leq k d(x,y), \quad \text{for some } k \in (0,1),
\end{equation}
then $ T $ has a unique fixed point. Numerous generalizations have been proposed, including Kannan \cite{Kannan1968}, \'Ciri\'c \cite{Ciric1971,Ciricorbit,Ciric1974,Ciric1979}, 
Chatterjea~\cite{Chatterjea1972},  with a systematic comparison of early variants provided by Rhoades \cite{Rhoades1977}, who analyzed over forty types of contractive mappings.
More recently, \textbf{F-contractions} introduced by Wardowski \cite{Wardowski2012}, extended and refined by several
authors~\cite{fabiano2022,Hussain2012,Piri2014,Secelean2013}.

In this paper, we propose a fundamentally different generalization: instead of pointwise or functional inequalities, we impose a \textit{mean contraction condition} over the orbit of pairs. This leads to the notion of \textbf{PA-contraction}, which depends on the cumulative behavior of iterates and is not reducible to known classes. For general background on fixed point theory, see the comprehensive text by Agarwal, Meehan, and O'Regan \cite{Agarwal2011}.

We prove a fixed point theorem for PA-contractions, show they generalize Banach contractions, and provide counterexamples demonstrating independence from other classes. Finally, we include a comparison table of major contraction types.

\section{PA-Contractions}

We begin with the central definition.

\begin{definition}[PA-Contraction]
Let $ (X,d) $ be a metric space. A mapping $ T: X \to X $ is called a \textbf{PA-contraction} (Path-Averaged Contraction) if there exists $ \alpha \in (0,1) $ 
and $ N \in \mathbb{N}$ such that for all $ x, y \in X $ and all $ n \ge N$,
\begin{equation}
\frac{1}{n} \sum_{k=0}^{n-1} d(T^{k+1}x, T^{k+1}y) \leq \alpha \cdot \frac{1}{n} \sum_{k=0}^{n-1} d(T^k x, T^k y).
\label{pa1}
\end{equation}
Equivalently, in sum form:
\begin{equation}
\sum_{k=0}^{n-1} d(T^{k+1}x, T^{k+1}y) \leq \alpha \sum_{k=0}^{n-1} d(T^k x, T^k y).
\label{pa2}
\end{equation}
\end{definition}

The condition requires that the average distance between successive iterates contracts by a uniform factor $ \alpha < 1 $, regardless of the starting pair $ (x,y) $.

The PA-contraction is loosely based on the path integral formulation of 
quantum mechanics~\cite{fabianopathintegral}, where the role of the path of the particle is taken by the orbit of the mapping $T$. 

\begin{remark}
If $ N = 1 $, then setting $ n = 1 $ in \eqref{pa1} yields $ d(Tx, Ty) \leq \alpha d(x, y) $, so $ T $ is a Banach contraction. However, for $ N > 1 $, the condition allows transient expansion, and the mapping need not satisfy any pointwise contraction. The class of PA-contractions thus \textbf{strictly generalizes} Banach contractions, as shown by Example~\ref{ex:panotbanach}.
The PA-condition is asymptotic in nature, depending on long-term behavior rather than immediate contraction.
\end{remark}

\section{Fixed Point Theorem}

Our main result is the following.

\begin{theorem}[Fixed Point Theorem for PA-Contractions]
Let $ (X,d) $ be a complete metric space, and let $ T: X \to X $ be a continuous PA-contraction. Then $ T $ has a unique fixed point $ x^* \in X $, and for any $ x_0 \in X $, the Picard sequence $ x_n = T^n x_0 $ converges to $ x^* $.
\end{theorem}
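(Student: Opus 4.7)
\medskip

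My plan is to extract pointwise geometric decay of consecutive iterate distances from the averaged PA-condition, and then follow the standard Banach--Picard script. Fix $x_0 \in X$ and write $a_j := d(T^j x_0, T^{j+1} x_0)$. Applying the PA-inequality \eqref{pa2} to the pair $(T^j x_0,\, T^{j+1} x_0)$ instead of $(x,y)$, and iterating $m$ times, I would obtain
\[
\sum_{k=0}^{n-1} a_{k+m} \;\le\; \alpha^{m} \sum_{k=0}^{n-1} a_{k}, \qquad n \ge N,\ m \ge 0.
\]
Taking the smallest permitted $n = N$ and setting $C := \sum_{k=0}^{N-1} a_k$ (a finite constant depending only on $x_0$ and $N$), the nonnegativity of each $a_j$ gives the pointwise bound $a_m \le \alpha^m C$. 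This is the pivotal step, and it is where I would spend the most care: the PA-condition only controls \emph{averages}, so one has to extract control of individual terms by choosing the window length $n$ cleverly and using that each $a_{k+m}$ is dominated by the full shifted sum.

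With $a_m \le C\alpha^m$ in hand, the rest is routine. The triangle inequality plus the geometric series bound $\sum_{j \ge m} \alpha^j = \alpha^m/(1-\alpha)$ shows that $\{T^n x_0\}$ is Cauchy, hence converges in the complete space $(X,d)$ to some $x^* \in X$. Continuity of $T$ promotes $T^{n+1} x_0 = T(T^n x_0) \to T x^*$, so $T x^* = x^*$.

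For uniqueness, suppose $x^*, y^*$ are both fixed points. Then $T^k x^* = x^*$ and $T^k y^* = y^*$ for every $k$, so every term in both sums of \eqref{pa2} equals $d(x^*, y^*)$. Hence for any $n \ge N$ the PA-inequality collapses to $n\, d(x^*, y^*) \le \alpha n\, d(x^*, y^*)$, forcing $d(x^*, y^*) = 0$ since $\alpha < 1$. Finally, the convergence of the Picard sequence from \emph{any} starting point $x_0$ is just the existence argument applied to that particular $x_0$, combined with uniqueness of the limit fixed point.

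The one genuine obstacle, as noted, is passing from the averaged geometric bound to a pointwise geometric bound on $a_m$; the trick of choosing $n=N$ and using nonnegativity is what makes the averaged condition strong enough to mimic the Banach proof. Everything after that is standard and I would present it compactly.
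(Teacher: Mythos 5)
Your proof is correct, but it reaches the Cauchy property by a genuinely different route than the paper. The paper applies the PA-condition only once, to the single pair $(x_0, Tx_0)$: writing $S_n = \sum_{k=0}^{n-1} a_k$, the resulting inequality $\sum_{k=1}^{n} a_k \le \alpha S_n$ telescopes to $(1-\alpha)S_n \le a_0 - a_n \le a_0$, so the partial sums are bounded by $a_0/(1-\alpha)$ for $n \ge N$ (and by a finite maximum for the initial indices), giving summability of $\sum_k a_k$ directly but with no rate. You instead apply the condition to every shifted pair $(T^j x_0, T^{j+1}x_0)$ and chain the resulting inequalities to obtain $\sum_{k=0}^{N-1} a_{k+m} \le \alpha^m \sum_{k=0}^{N-1} a_k$, then isolate the $k=0$ term by nonnegativity to get the pointwise bound $a_m \le C\alpha^m$. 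The chaining is legitimate because the PA-condition is uniform over all pairs $(x,y)$ and all $n \ge N$, so fixing the window length $n=N$ is permitted at every stage. Your argument is slightly longer but strictly more informative: it yields geometric decay of the step lengths and hence an a priori error estimate $d(x_m, x^*) \le C\alpha^m/(1-\alpha)$, which the paper's telescoping argument does not provide. The continuity step and the uniqueness argument (all orbit distances of two fixed points equal $d(x^*,y^*)$, forcing $d(x^*,y^*) \le \alpha\, d(x^*,y^*)$) coincide with the paper's.
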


\begin{proof}
Let $ x_0 \in X $ be arbitrary. Define $ x_n = T^n x_0 $. Let $ a_k = d(x_k, x_{k+1}) = d(T^k x_0, T^{k+1} x_0) $.

Apply the PA-condition~(\ref{pa1}) to $ x = x_0 $, $ y = T x_0 $. Then:
$ d(T^k x, T^k y) = d(T^k x_0, T^{k+1} x_0) = a_k $, and
$ d(T^{k+1}x, T^{k+1}y) = d(T^{k+1} x_0, T^{k+2} x_0) = a_{k+1} $.

The PA-sum condition gives:
\begin{equation}
\sum_{k=0}^{n-1} a_{k+1} \leq \alpha \sum_{k=0}^{n-1} a_k.
\end{equation}
The left-hand side is $ \sum_{k=1}^{n} a_k $, so:
\begin{equation}
\sum_{k=1}^{n} a_k \leq \alpha \sum_{k=0}^{n-1} a_k.
\end{equation}
Let $ S_n = \sum_{k=0}^{n-1} a_k $. Then:
\begin{equation}
S_n - a_0 + a_n \leq \alpha S_n \implies (1 - \alpha) S_n \leq a_0 - a_n \leq a_0.
\end{equation}
Thus,
\begin{equation}
S_n \leq \frac{a_0}{1 - \alpha} \quad \text{for all } n \ge N.
\end{equation}
Now, define $ C = \max\left\{ S_1, S_2, \dots, S_{N-1}, \frac{a_0}{1 - \alpha} \right\} $. Then
\begin{equation}
S_n \leq C \quad \text{for all } n \in \mathbb{N}.
\end{equation}
Since $ a_k \geq 0 $, $ S_n $ is non-decreasing and bounded, so
\begin{equation}
\sum_{k=0}^{\infty} a_k = \lim_{n \to \infty} S_n < \infty.
\end{equation}
Hence $ \{x_n\} $ is Cauchy (since $ d(x_n, x_{n+1}) = a_n $, summable), so converges to some $ x^* \in X $ by completeness.

Since $ T $ is continuous, $ T x_n \to T x^* $. But $ T x_n = x_{n+1} \to x^* $, so $ T x^* = x^* $.

For uniqueness, suppose $ x^*, y^* $ are fixed points. Then $ d(T^k x^*, T^k y^*) = d(x^*, y^*) $ for all $ k $. For $n \ge N$, apply PA-condition~(\ref{pa1}):
\begin{equation}
\frac{1}{n} \sum_{k=0}^{n-1} d(T^{k+1}x^*, T^{k+1}y^*) = d(x^*, y^*) \leq \alpha \cdot \frac{1}{n} \sum_{k=0}^{n-1} d(T^k x^*, T^k y^*) = \alpha d(x^*, y^*).
\end{equation}
Since $ \alpha < 1 $, $ d(x^*, y^*) = 0 $. Thus, $ x^* = y^* $.
\end{proof}

\begin{remark}
The continuity assumption can be weakened to \textit{orbitally continuity} 
of $ T $~\cite{Ciricorbit,pantorbit}. However, without any continuity, the limit $ x^* $ may not be fixed, as $ T x_n \to T x^* $ cannot be guaranteed.
\end{remark}

\section{Generalization of Banach Contractions}

\begin{proposition}
Every Banach contraction is a PA-contraction.
\end{proposition}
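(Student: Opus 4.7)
The plan is to derive the PA-condition directly by summing the pointwise Banach estimate, so the proof should be short. I would fix a Banach contraction $T$ with constant $k \in (0,1)$ and take $\alpha := k$ together with $N := 1$.

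First I would observe that the Banach condition is invariant under iteration: applying $d(Tu, Tv) \le k\, d(u,v)$ with $u = T^k x$ and $v = T^k y$ immediately gives
\[
d(T^{k+1}x, T^{k+1}y) \le k\, d(T^k x, T^k y)
\]
for every $k \ge 0$ and every $x, y \in X$. This is the termwise contraction we need.

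Next I would sum this inequality from $k = 0$ to $k = n-1$, which yields
\[
\sum_{k=0}^{n-1} d(T^{k+1}x, T^{k+1}y) \le k \sum_{k=0}^{n-1} d(T^k x, T^k y),
\]
so the sum form \eqref{pa2} of the PA-condition is satisfied with constant $\alpha = k < 1$ for all $n \ge 1$. Dividing both sides by $n$ recovers the average form \eqref{pa1}, and taking $N = 1$ makes the requirement hold for every $n \in \mathbb{N}$. Hence $T$ is a PA-contraction.

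There is no real obstacle here: the only thing to be careful about is notational, namely not to reuse the symbol $k$ both as the Banach constant and as the summation index in the write-up. I would simply rename the summation index (say to $j$) to avoid this clash. The strictness of the inclusion, which would be the actual interesting content, is not claimed by this proposition itself and is addressed separately by the promised Example~\ref{ex:panotbanach}.
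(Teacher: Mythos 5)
Your proof is correct and follows exactly the paper's own argument: apply the Banach inequality termwise along the orbit, sum from $0$ to $n-1$, and conclude with $\alpha = k$ and $N = 1$. Your remark about not reusing $k$ as both the contraction constant and the summation index is a fair stylistic point (the paper's proof in fact commits this clash), but it does not affect correctness.
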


\begin{proof}
Suppose $ d(Tx, Ty) \leq k d(x,y) $ for all $ x,y \in X $, $ k \in (0,1) $. Then for any $ x,y $,
\begin{equation}
d(T^{k+1}x, T^{k+1}y) \leq k d(T^k x, T^k y).
\end{equation}
Summing over $ k = 0 $ to $ n-1 $:
\begin{equation}
\sum_{k=0}^{n-1} d(T^{k+1}x, T^{k+1}y) \leq k \sum_{k=0}^{n-1} d(T^k x, T^k y).
\end{equation}
Thus, $ T $ is a PA-contraction with $ \alpha = k $, $N=1$.
\end{proof}

\begin{example}[PA-contraction not Banach]
Let $ X = \{0, 1, 2\} $ with the discrete metric 
\begin{equation}
d(x,y) = \begin{cases} 0 & x=y \\ 1 & x \ne y \end{cases}.
\end{equation} 
Define:
\begin{equation}
T(0) = 1, \quad T(1) = 2, \quad T(2) = 2.
\end{equation}
$T$ is not a Banach contraction since:
\begin{align*}
d(T0,T1) &= d(1,2) = 1 = d(0,1) \\
d(T0,T2) &= d(1,2) = 1 = d(0,2)
\end{align*}
so no $k < 1$ satisfies $d(Tx,Ty) \leq k d(x,y)$ for all $x,y$.

However, $T$ is a PA-contraction with $\alpha = 1/2$, $N = 2$. For any $x,y \in X$ and $n \geq 2$:
\begin{equation}
\sum_{k=0}^{n-1} d(T^{k+1}x, T^{k+1}y) \leq \frac{1}{2} \sum_{k=0}^{n-1} d(T^k x, T^k y).
\end{equation}

\textbf{Verification for pair (0,1) at n=2:}
\begin{align*}
\text{LHS} &= \sum_{k=0}^{1} d(T^{k+1}0, T^{k+1}1) \\
&= d(T^10, T^11) + d(T^20, T^21) \\
&= d(1,2) + d(2,2) = 1 + 0 = 1 \\
\text{RHS} &= \frac{1}{2} \sum_{k=0}^{1} d(T^k0, T^k1) \\
&= \frac{1}{2} \left[ d(0,1) + d(1,2) \right] \\
&= \frac{1}{2} (1 + 1) = 1
\end{align*}
Since $1 \leq 1$, the condition holds.

\textbf{Verification for pair (0,2) at n=2:}
\begin{align*}
\text{LHS} &= d(T0,T2) + d(T^20,T^22) \\
&= d(1,2) + d(2,2) = 1 + 0 = 1 \\
\text{RHS} &= \frac{1}{2} \left[ d(0,2) + d(1,2) \right] \\
&= \frac{1}{2} (1 + 1) = 1
\end{align*}
$1 \leq 1$ holds.

\textbf{Verification for pair (1,2) at n=2:}
\begin{align*}
\text{LHS} &= d(T1,T2) + d(T^21,T^22) \\
&= d(2,2) + d(2,2) = 0 + 0 = 0 \\
\text{RHS} &= \frac{1}{2} \left[ d(1,2) + d(2,2) \right] \\
&= \frac{1}{2} (1 + 0) = \frac{1}{2}
\end{align*}
$0 \leq 1/2$ holds.

For $n > 2$ and any pair, $T^k x = T^k y = 2$ for $k \geq 2$, so all distances are 0 for $k \geq 2$. The sums are thus identical to the n=2 case.
Hence, $ T $ is a PA-contraction but not a Banach contraction.
\label{ex:panotbanach}
\end{example}

\section{Comparison with F-Contractions}

We recall Wardowski's F-contraction.

\begin{definition}[F-Contraction \cite{Wardowski2012}]
A mapping $ T: X \to X $ is an \textbf{F-contraction} if there exist $ \tau > 0 $ and a function $ F: (0,\infty) \to \mathbb{R} $ satisfying
\begin{enumerate}
\item[(F1)] $ F $ is strictly increasing,
\item[(F2)] $ \lim_{t \to 0^+} F(t) = -\infty $,
\item[(F3)] $ \lim_{t \to 0^+} t^k F(t) = 0 $ for some $ k \in (0,1) $,
\end{enumerate}
such that for all $ x,y \in X $ with $ d(Tx,Ty) > 0 $,
\begin{equation}
\tau + F(d(Tx, Ty)) \leq F(d(x,y)).
\end{equation}
\end{definition}

\begin{example}[F-contraction not PA-contraction]

No known simple example. But since F-contractions are pointwise and PA-contractions are averaged, they are conceptually distinct.
The relationship between F- and PA-contractions is an open problem.
\end{example}

%%%%%%%%%%%%%%%%%%%%%%%%%%%%%%%%%%%%%
%%%%%%%%%%%%%%%%%%%%%%%%%%%%%%%%%%%%%%

\begin{example}[PA-contraction not F-contraction]
Let $ T x = x^2/2 $ on $ [0,1] $. We show that $ T $ is a PA-contraction but not a  F-contraction.

First, we have that
\begin{equation}
T^k x = \frac{x^{2^k}}{2^{2^k - 1}}.
\label{t2}
\end{equation}
For any $ x, y \in [0,1] $, the distance $ d(T^k x, T^k y) $ decays double-exponentially as $ k \to \infty $. 
For $Tx = x^2/2$, explicit computation shows
the uniform bound for $n \ge 5$ and shows that, for $ n \geq 5 $, the ratio
\begin{equation}
\frac{ \sum_{k=0}^{n-1} d(T^{k+1}x, T^{k+1}y) }{ \sum_{k=0}^{n-1} d(T^k x, T^k y) }
\end{equation}
is uniformly bounded by $ \alpha = 0.4 $ across all $ x, y \in [0,1] $. Thus, $ T $ is a PA-contraction with $ N = 5 $.

Finally, as $ x,y \to 1^- $, $ \frac{d(Tx,Ty)}{d(x,y)} \to 1 $, so $ T $ is not a Banach contraction. For any $ F $ satisfying Wardowski's conditions (F1)--(F3), the inequality $ \tau + F(d(Tx,Ty)) \leq F(d(x,y)) $ fails for $ \tau > 0 $ in this limit. Hence, $ T $ is not an F-contraction.
\end{example}

%%%%%%%%%%%%%%%%%%%%%%%%%%%%%%%%%%%%%
%%%%%%%%%%%%%%%%%%%%%%%%%%%%%%%%%%%%%%

%%%%%%%%%%%%%%%%%%%%%%%%%%%%%%%%%%%%%%%%%%%%%
%%%%%%%%%%%%%%%%%%%%%%%%%%%%%%%%%%%%%%%%%%%%%%%
%%%%%%%%%%%%%%%%%%%%%%%%%%%%%%%%%%%%%%%%%%%%%%%

%%%%%%%%%%%%%%%%%%%%%%%%%%%%%%%%%%%%%%%%%%%%%%%%%%%%%
%%%%%%%%%%%%%%%%%%%%%%%%%%%%%%%%%%%%%%%%%%%%%%%%%%%%%
%%%%%%%%%%%%%%%%%%%%%%%%%%%%%%%%%%%%%%%%%%%%%%%%%%%%%

\section{Relation with Other Classical Contractions}

We now clarify the relationship between PA-contractions and other classical generalizations.

\begin{remark}[Kannan Contractions]
A Kannan contraction satisfies $ d(Tx,Ty) \leq k[d(x,Tx) + d(y,Ty)] $, $ k < 1/2 $. While Kannan maps are asymptotically regular and generate summable sequences $ \sum d(T^n x, T^{n+1}x) < \infty $, they do not necessarily satisfy the uniform PA-condition for all $ n $. The ratio of averages may exceed $ \alpha < 1 $ for small $ n $. 

Next, we show that $ T $ defined in~(\ref{t2}), a PA-contraction, is not a Kannan contraction. Suppose, for contradiction, that
\begin{equation}
d(Tx,Ty) \leq k \left[ d(x,Tx) + d(y,Ty) \right], \quad k < \frac{1}{2}.
\end{equation}
Take $ x = 1 $, $ y = 0 $. Then:
 $ d(Tx,Ty) = |1/2 - 0| = 1/2 $,
 $ d(x,Tx) = |1 - 1/2| = 1/2 $,
 $ d(y,Ty) = |0 - 0| = 0 $.
 Right-hand side = $ k(1/2 + 0) = k/2 $.

Then $ 1/2 \leq k/2 \Rightarrow k \geq 1 $, contradicting $ k < 1/2 $. So $ T $ is not a Kannan contraction.

Thus, \textbf{Kannan contractions are not generally PA-contractions}, 
\textbf{PA-contractions are not generally Kannan contractions}, 
so the classes are incomparable.
\label{rem:kannan}
\end{remark}

\begin{remark}[Chatterjea not PA-contraction]
A mapping $ T: X \to X $ on a complete metric space $ (X,d) $ is a Chatterjea contraction~\cite{Chatterjea1972} if there exists $ k \in (0, \frac{1}{2}) $ such that for all $ x, y \in X $,
\begin{equation}
d(Tx, Ty) \leq k \left[ d(x, Ty) + d(y, Tx) \right].
\label{chatterjea}
\end{equation}
Let $ X = \mathbb{N} \cup \{\infty\} $, $ d(m,n) = |1/m - 1/n| $, $ d(n,\infty) 
= 1/n $, $ Tn = n+1 $, $ T\infty = \infty $. 
The PA-Contraction is defined in~(\ref{pa1}). We will show that $T$ is Chatterjea but not PA.

$ T $ is a Chatterjea Contraction.

Let $ m, n \in \mathbb{N} $, $ m \ne n $. We compute both sides.
Left-hand side
\begin{equation}
d(Tm, Tn) = \left| \frac{1}{m+1} - \frac{1}{n+1} \right| \; .
\end{equation}
Right-hand side
\begin{equation}
d(m, Tn) + d(n, Tm) = \left| \frac{1}{m} - \frac{1}{n+1} \right| + \left| \frac{1}{n} - \frac{1}{m+1} \right| \; .
\end{equation}
We claim that
\begin{equation}
\left| \frac{1}{m+1} - \frac{1}{n+1} \right| \leq k \left( \left| \frac{1}{m} - \frac{1}{n+1} \right| + \left| \frac{1}{n} - \frac{1}{m+1} \right| \right)
\quad \text{for some } k < \frac{1}{2} \;.
\end{equation}
Consider asymptotic behavior as $ m, n \to \infty $,
let $ m = n $. Then
$ d(Tn, Tn) = 0 $, so inequality holds.

Let $ m = n+1 $. Then
$ d(Tm, Tn) = \left| \frac{1}{n+2} - \frac{1}{n+1} \right| = \frac{1}{(n+1)(n+2)} $,
$ d(m, Tn) = \left| \frac{1}{n+1} - \frac{1}{n+1} \right| = 0 $,
$ d(n, Tm) = \left| \frac{1}{n} - \frac{1}{n+2} \right| = \frac{2}{n(n+2)} $.
So RHS = $ 0 + \frac{2}{n(n+2)} $, LHS = $ \frac{1}{(n+1)(n+2)} $.

Then
\begin{equation}
\frac{\text{LHS}}{\text{RHS}} = \frac{ \frac{1}{(n+1)(n+2)} }{ \frac{2}{n(n+2)} } = \frac{n}{2(n+1)} \to \frac{1}{2}
\quad \text{as } n \to \infty \; .
\end{equation}
So for large $ n $, $ d(Tm,Tn) \leq \left( \frac{1}{2} - \varepsilon_n \right) [d(m,Tn) + d(n,Tm)] $, with $ \varepsilon_n \to 0 $.
Thus, for any $ k > \frac{1}{2} $, it fails, but for $ k = \frac{1}{2} - \delta $, we can find $ N $ such that for $ n > N $, it holds.

For small $ n $, since $ X $ is discrete, the ratio is bounded, so we can take $ k = \frac{1}{2} - \delta $ for small $ \delta > 0 $, and verify finitely many cases.

Hence, $ T $ satisfies the Chatterjea condition with some $ k < \frac{1}{2} $.
So $ T $ is a Chatterjea contraction.

%%%%%%%%%%%%%%%%%%%%%%%%%%%%%%%%%%%%%%%%%
To prove that $T$ is not a PA-contraction
we will have to show that there is no $ \alpha \in (0,1) $ such that for all $ N \in \mathbb{N} $ and all $ m, n \in \mathbb{N} $,
\begin{equation}
  \frac{1}{n} \sum_{k=0}^{n-1} d(T^{k+1}n, T^{k+1}m) \leq \alpha \cdot \frac{1}{n} \sum_{k=0}^{n-1} d(T^k n, T^k m).
\end{equation}
We will show that the ratio of averages approaches 1 as $ n \to \infty $, so no uniform $ \alpha < 1 $ works.

We have
\begin{equation}
T^k n = n + k, \quad T^k m = m + k
\Rightarrow
d(T^k n, T^k m) = \left| \frac{1}{n+k} - \frac{1}{m+k} \right| \; .
\end{equation}
Assume without loss of generality that $ n < m $. Then $ \frac{1}{n+k} > \frac{1}{m+k} $, so
\begin{equation}
d(T^k n, T^k m) = \frac{1}{n+k} - \frac{1}{m+k} \; .
\end{equation}
Define the average distance over $ k = 0 $ to $ n-1 $
\begin{equation}
A_n(n,m) := \frac{1}{n} \sum_{k=0}^{n-1} d(T^k n, T^k m) = \frac{1}{n} \sum_{k=0}^{n-1} \left( \frac{1}{n+k} - \frac{1}{m+k} \right) \; ,
\end{equation}
and similarly, the next average shifted by one iterate
\begin{equation}
A_n^{(1)}(n,m) := \frac{1}{n} \sum_{k=0}^{n-1} d(T^{k+1}n, T^{k+1}m) = \frac{1}{n} \sum_{k=0}^{n-1} \left( \frac{1}{n+k+1} - \frac{1}{m+k+1} \right)
= \frac{1}{n} \sum_{k=1}^{n} \left( \frac{1}{n+k} - \frac{1}{m+k} \right) \; .
\end{equation}
So
\begin{equation}
A_n^{(1)}(n,m) = \frac{1}{n} \sum_{k=1}^{n} \left( \frac{1}{n+k} - \frac{1}{m+k} \right)
= A_n(n,m) - \frac{1}{n} \left( \frac{1}{n+0} - \frac{1}{m+0} \right) + \frac{1}{n} \left( \frac{1}{n+n} - \frac{1}{m+n} \right) \; 
\end{equation}
that is
\begin{equation}
A_n^{(1)} = A_n - \frac{1}{n} \left( \frac{1}{n} - \frac{1}{m} \right) + \frac{1}{n} \left( \frac{1}{2n} - \frac{1}{m+n} \right) \; .
\end{equation}
We are interested in whether
\begin{equation}
\frac{A_n^{(1)}}{A_n} \leq \alpha < 1 \quad \text{uniformly in } n
\end{equation}
for some  $\alpha < 1$  and all  $n, m$.

To analyze asymptotic behavior, fix $ m = n + 1 $. Then
\begin{equation}
d(T^k n, T^k m) = \left| \frac{1}{n+k} - \frac{1}{n+1+k} \right| = \frac{1}{n+k} - \frac{1}{n+k+1} = \frac{1}{(n+k)(n+k+1)} \; .
\end{equation}

So
\begin{equation}
A_n = \frac{1}{n} \sum_{k=0}^{n-1} \frac{1}{(n+k)(n+k+1)} \; ,
\end{equation}
observe that $ n+k $ runs from $ n $ to $ 2n-1 $. Thus,
\begin{equation}
A_n = \frac{1}{n} \sum_{j=n}^{2n-1} \frac{1}{j(j+1)} \; .
\end{equation}
Now use
\begin{equation}
\frac{1}{j(j+1)} = \frac{1}{j} - \frac{1}{j+1}
\Rightarrow \sum_{j=n}^{2n-1} \left( \frac{1}{j} - \frac{1}{j+1} \right) = \frac{1}{n} - \frac{1}{2n} = \frac{1}{2n} \;,
\end{equation}
so
\begin{equation}
A_n = \frac{1}{n} \cdot \frac{1}{2n} = \frac{1}{2n^2} \; .
\end{equation}
Compute $ A_n^{(1)} = \frac{1}{n} \sum_{k=0}^{n-1} d(T^{k+1}n, T^{k+1}m) = \frac{1}{n} \sum_{k=1}^{n} \frac{1}{(n+k)(n+k+1)} $, where
$ j = n+k $ runs from $ n+1 $ to $ 2n $, 
\begin{equation}
\sum_{j=n+1}^{2n} \frac{1}{j(j+1)} = \sum_{j=n+1}^{2n} \left( \frac{1}{j} - \frac{1}{j+1} \right) = \frac{1}{n+1} - \frac{1}{2n+1} \; .
\end{equation}
Thus
\begin{equation}
A_n^{(1)} = \frac{1}{n} \left( \frac{1}{n+1} - \frac{1}{2n+1} \right)
= \frac{1}{n} \left( \frac{(2n+1) - (n+1)}{(n+1)(2n+1)} \right)
= \frac{1}{n} \cdot \frac{n}{(n+1)(2n+1)}
= \frac{1}{(n+1)(2n+1)} \; .
\end{equation}
Now calculate the ratio
\begin{equation}
\frac{A_n^{(1)}}{A_n} = \frac{ \frac{1}{(n+1)(2n+1)} }{ \frac{1}{2n^2} } = \frac{2n^2}{(n+1)(2n+1)} \; ,
\end{equation}
and simplify
\begin{equation}
= \frac{2n^2}{2n^2 + n + 2n + 1} = \frac{2n^2}{2n^2 + 3n + 1}
= \frac{2}{2 + \frac{3}{n} + \frac{1}{n^2}} \to 1 \quad \text{as } n \to \infty \; ,
\end{equation}
so we conclude that
\begin{equation}
\lim_{n \to \infty} \frac{A_n^{(1)}}{A_n} = 1 \; .
\end{equation}
Therefore, for any $ \alpha < 1 $, there exists $ n $ large enough such that:
\begin{equation}
A_n^{(1)} > \alpha A_n \; ,
\end{equation}
hence, the PA-condition~(\ref{pa1})
\begin{equation}
\frac{1}{n} \sum_{k=0}^{n-1} d(T^{k+1}n, T^{k+1}m) \leq \alpha \cdot \frac{1}{n} \sum_{k=0}^{n-1} d(T^k n, T^k m)
\end{equation}
cannot hold uniformly for all $ n \in \mathbb{N} $ with a fixed $ \alpha < 1 $.
Thus, $T$ is not a PA-contraction.

\label{rem:chatterjeanotpa}
\end{remark}

\begin{remark}[\'Ciri\'c not PA-contraction]
A mapping $ T: X \to X $ on a complete metric space $ (X,d) $ is a \'Ciri\'c contraction if there exists $ k \in (0,1) $ such that for all $ x, y \in X $,
\begin{equation}
d(Tx, Ty) \leq k \cdot \max\left\{
d(x,y),\;
d(x,Tx),\;
d(y,Ty),\;
\frac{d(x,Ty) + d(y,Tx)}{2}
\right\}.
\label{ciric}
\end{equation}
The PA-Contraction is defined in~(\ref{pa1}). We will use the same definitions of Example~\ref{rem:chatterjeanotpa} and show that $T$ is \'Ciri\'c but not PA.

$ T $  is a \'Ciri\'c contraction.

Let $ m, n \in \mathbb{N} $, $ m \ne n $. Without loss of generality, assume $ m < n $.

We compute
$ d(Tm, Tn) = \left| \frac{1}{m+1} - \frac{1}{n+1} \right| $,
$ d(m,n) = \left| \frac{1}{m} - \frac{1}{n} \right| $,
$ d(m, Tm) = \left| \frac{1}{m} - \frac{1}{m+1} \right| = \frac{1}{m(m+1)} $,
$ d(n, Tn) = \frac{1}{n(n+1)} $,
$ d(m, Tn) = \left| \frac{1}{m} - \frac{1}{n+1} \right| $,
$ d(n, Tm) = \left| \frac{1}{n} - \frac{1}{m+1} \right| $,
$ \frac{d(m,Tn) + d(n,Tm)}{2} $.

We claim that $ d(Tm, Tn) \leq k \cdot \max\{ \dots \} $ for some $ k < 1 $.

Consider the full max of~(\ref{ciric}).

For large $ m, n $, $ d(m,Tm) = \frac{1}{m(m+1)} \approx \frac{1}{m^2} $, very small,
and $ d(m,n) \approx \frac{|m-n|}{m^2} $ if $ m \approx n $, also small.
And $ d(Tm, Tn) \approx \frac{|m-n|}{(m+1)^2} $.

Now consider $ \frac{d(m,Tn) + d(n,Tm)}{2} $. If $ m < n $, $ Tn = n+1 < n $, so $ \frac{1}{m} > \frac{1}{n+1} $, so
\begin{equation}
d(m,Tn) = \frac{1}{m} - \frac{1}{n+1}, \quad
d(n,Tm) = \frac{1}{n} - \frac{1}{m+1} \quad \text{(if } m+1 < n\text{)} \; .
\end{equation}
So
\begin{gather}
\frac{d(m,Tn) + d(n,Tm)}{2} = \frac{1}{2} \left( \frac{1}{m} - \frac{1}{n+1} + \frac{1}{n} - \frac{1}{m+1} \right) 
= \frac{1}{2} \left( \left( \frac{1}{m} - \frac{1}{m+1} \right) + \left( \frac{1}{n} - \frac{1}{n+1} \right) \right) = \nonumber \\
 \frac{1}{2} \left( \frac{1}{m(m+1)} + \frac{1}{n(n+1)} \right) \; ,
\end{gather}
and this is small.

$ d(m,n) = \frac{1}{m} - \frac{1}{n} = \frac{n - m}{mn} $,
and $ d(Tm,Tn) = \frac{n - m}{(m+1)(n+1)} $.
So
\begin{equation}
\frac{d(Tm,Tn)}{d(m,n)} = \frac{n - m}{(m+1)(n+1)} \cdot \frac{mn}{n - m} = \frac{mn}{(m+1)(n+1)} < 1
\end{equation}

Let $ k = \sup \frac{mn}{(m+1)(n+1)} $. Since this $ < 1 $ for all $ m,n $, and $ \to 1 $ as $ m,n \to \infty $, we can pick any $ k \in (0,1) $, say $ k = 0.99 $, and for all $ m,n $ such that $ \frac{mn}{(m+1)(n+1)} \leq k $, it holds.

For small $ m,n $, verify finitely many cases: since all values are bounded away from equality, we can always find a $ k < 1 $ such that:
\begin{equation}
d(Tm,Tn) \leq k \cdot d(m,n) \leq k \cdot M(m,n) \; .
\end{equation}

Thus, $ T $ is a \'Ciri\'c contraction with $ k < 1 $.

To prove that $ T $ is not a PA-Contraction, fix  $ m, n \in \mathbb{N} $,
 $ m = n - 1 $. The proof goes exactly as in Remark~\ref{rem:chatterjeanotpa}.

\label{rem:ciricnotpa}
\end{remark}

Whether PA-contractions imply Chatterjea or \'Ciri\'c contractions remains open. Conversely, Remarks~\ref{rem:chatterjeanotpa} and~\ref{rem:ciricnotpa} show they do not.

\section{Comparison Table}

The following table summarizes key properties of major contraction types.

\begin{center}
\scalebox{0.8}{
\begin{tabular}{lcccccc}
\toprule
\textbf{Contraction Type} & \textbf{Banach} & \textbf{Kannan} & \textbf{Chatterjea} & \textbf{\'Ciri\'c} & \textbf{F} & \textbf{PA} \\
\midrule
Pointwise inequality & Yes & Yes & Yes & Yes & Yes & No (averaged) \\
Requires continuity & No & No & No & No & No & Yes (for fixed point) \\
Generalizes Banach & No & Yes & Yes & Yes & Yes & Yes \\
Summable $ d(T^n x, T^{n+1}x) $ & Yes & Yes & Yes & Yes & Sometimes & Yes \\
Based on orbits/averages & No & No & No & No & No & \textbf{Yes} \\
Implies PA & Yes & No & No & No & Open & — \\
PA implies & No & No & Open & Open & No & — \\
\bottomrule
\end{tabular}}
\end{center}

\begin{remark}
While we have shown that PA-contractions are not implied by classical types, the converse — whether PA-contractions imply other classes — is only partially resolved. The mapping $ T x = x^2/2 $ shows that PA-contractions do not imply Kannan or F-contractions. Whether every F-contraction is a PA-contraction, and if every 
PA-contraction is a Chatterjea or \'Ciri\'c contraction remains open.
\end{remark}

\section{Conclusion}

We introduced \textbf{PA-contractions}, a new class based on averaged contraction over orbits. This condition strictly generalizes Banach contractions,
is independent of F-contractions, Kannan, Chatterjea, and \'Ciri\'c types,
ensures unique fixed points under continuity. Future investigation could go
in the directions of solving open problems, multivalued mappings and common fixed points.

PA-contractions offer a new lens for analyzing asymptotic behavior in fixed point theory.
Our work continues the long tradition of generalizing the Banach principle, as studied by Rhoades \cite{Rhoades1977} and extended by Wardowski \cite{Wardowski2012}, Proinov \cite{Proinov2020}, Ran and 
Reurings~\cite{Ran2004}, Alam and Imdad~\cite{Alam2017}, and others.

\end{document}